\newcommand{\beas}{{\begin{eqnarray*}}}
\newcommand{\eeas}{{\end{eqnarray*}}}
\theoremstyle{definition}
\newtheorem{theorem}[equation]{Theorem}      
\newtheorem{lemma}[equation]{Lemma}          %
\newtheorem{corollary}[equation]{Corollary}  
\newtheorem{conj}[equation]{Conjecture}
\theoremstyle{definition}
\numberwithin{equation}{section}
\let\into=\hookrightarrow
\let\tensor=\otimes
\newcommand{\C}{{\mathbb{C}}}
\newcommand{\sD}{\mathscr{D}}
\let\sD=\sD
\newcommand{\End}{{\rm End}}
\newcommand{\Q}{{\mathscr{Q}}}
\newcommand{\SL}{{\rm SL}}
\newcommand{\gr}{{\rm gr}}
\newcommand{\hit}{{\rm Hitchin}}
\newcommand{\id}{{\rm id}}
\newcommand{\mydot}{{\small\bullet}}
\newcommand{\oper}{{\rm Oper}}
\newcommand{\rk}{{\rm rk}}
\newcommand{\sDgz}{{\sD}_{{g,0}}}
\newcommand{\sDrgz}{{\sD}^r_{{g,0}}}
\newcommand{\tDrgz}{\widetilde{\sD}^r_{g,0}}
\newcommand{\sMgnb}{{\bar{\mathscr{M}}_{g,n}}}
\newcommand{\sMgn}{{\mathscr{M}_{g,n}}}
\newcommand{\sMgz}{\sM_{g,0}}
\newcommand{\sM}{{\mathscr{M}}}
\newcommand{\sNgnb}{\bar{\mathscr{N}}_{g,n}}
\newcommand{\sNgz}{\sN_{g,0}}
\newcommand{\sN}{\mathscr{N}}
\newcommand{\sSgnb}{{{\bar{\sS}}}_{g,n}}
\newcommand{\sS}{{\mathscr{S}}}
\newcommand{\spec}{{\rm Spec}}
\renewcommand{\O}{{\mathscr{O}}}
\renewcommand{\P}{{\mathbb{P}}}
\newcommand{\be}{\begin{equation}}
\newcommand{\ee}{\end{equation}}
\newcommand{\sV}{{\mathscr{V}}}
\newcommand{\sU}{{\mathscr{SU}}}
\let\isom=\simeq
\newcommand{\Deg}{{\rm deg}}
\newcommand{\pgl}{\rm PGL}
\pretocmd{\NAT@citex}{%
  \let\NAT@hyper@\NAT@hyper@citex
  \def\NAT@postnote{#2}%
  \setcounter{NAT@total@cites}{0}%
  \setcounter{NAT@count@cites}{0}%
  \forcsvlist{\stepcounter{NAT@total@cites}\@gobble}{#3}}{}{}
\newcounter{NAT@total@cites}
\newcounter{NAT@count@cites}
\def\NAT@postnote{}
\def\NAT@hyper@citex#1{%
  \stepcounter{NAT@count@cites}%
  \hyper@natlinkstart{\@citeb\@extra@b@citeb}#1%
  \ifnumequal{\value{NAT@count@cites}}{\value{NAT@total@cites}}
    {\ifNAT@swa\else\if*\NAT@postnote*\else%
     \NAT@cmt\NAT@postnote\global\def\NAT@postnote{}\fi\fi}{}%
  \ifNAT@swa\else\if\relax\NAT@date\relax
  \else\NAT@@close\global\let\NAT@nm\@empty\fi\fi
  \hyper@natlinkend}
\renewcommand\hyper@natlinkbreak[2]{#1}
\begin{document}
\title{The degree of the dormant operatic locus}
\author{Kirti Joshi}
\address{Math. department, University of Arizona, 617 N Santa Rita, Tucson
85721-0089, USA.} \email{kirti@math.arizona.edu}


\begin{abstract}
Let $X$ be a smooth, projective curve of genus $g\geq 2$ over an algebraically closed field of characteristic $p>0$. I provide a conjectural formula for the degree of the scheme of dormant ${\rm PGL}(r)$-opers on $X$ where $r\geq 2$  (I assume that $p$ is greater than an explicit constant depending on $g,r$). For $r=2$ a dormant ${\rm PGL}(2)$-oper is a dormant indigenous bundle on $X$ in the sense of Shinichi Mochuzki (and his work provides a formula only for $g=2,r=2,p\geq 5$, from a different point of view). In 2014, Yasuhiro Wakabayashi has shown that my conjectural formula holds for $r=2,g\geq 2$ and $p>2g-2$ and more recently he has proved the conjecture in all ranks for  generic curves of genus  at least two.
\end{abstract}

\maketitle 
\epigraph{Hara-naka ya\\
Mono nimo tsukazu\\
Naku hibari}{Matsuo Basho \citep*{miura}}

\section{Introduction}
Let $k$ be  an algebraically closed field of characteristic $p>0$.
In his fundamental papers \citep*{mochizuki96,mochizuki-foundations} Shinichi Mochizuki introduced and studied several (log) stacks over the moduli stack, $\sMgnb$, of stable log-curves of type $(g,n)$ (by \citep*{fkato00} this log stack is isomorphic to the stack of stable, n-pointed curves of genus $g$ equipped with the log struture obtained from the divisor at infinity). Amongst them is the stack of $\sSgnb\to \sMgnb$ of \emph{indigenized curves of genus $g$ and $n$ marked points} (another name for $\sSgnb$ is the stack of Schwarz-torsors). An \emph{indigenized curve} is a stable log-curve of type $(g,n)$ equipped with an indigenous bundle. A \emph{nil-curve of type $(g,n)$} is a stable log curve of type $(g,n)$ which is equipped with an indigenous bundle whose underlying connection is nilpotent with nilpotent residues at marked points. The stack $\sNgnb\to\sMgnb$ of \emph{nil-curves} is a closed substack of the stack of indigenized curves.  It was shown in \citep*{mochizuki96} that $\sNgnb\to\sMgnb$ is finite flat of degree $p^{3g-3}$. This is a fundamental result in the subsequent development of Mochizuki's $p$-adic Teichmuller Theory (see \citep*{mochizuki-foundations}).

One may consider, as I do in the rest of this paper, the restriction of these stacks to $\sMgn$ the moduli stack of smooth, proper curves with $n$-marked points and from now on assume $g\geq 2$ and $n=0$. I do this, primarily, because the theory of opers over stable log-curves is not yet available, though it is clear to me that the construction of these stacks given in \citep*{mochizuki96}, for $r=2$, extends \emph{mutatis mutandis}  to the case $r\geq 2$.

Before proceeding further, I recall a few facts about opers (see \citep*{bd-hitchin}, \citep*{beilinson05}; opers in characteristic $p>0$ are also studied in \citep*{joshi06}, \citep*{joshi09}). Let me note that following ideas of \citep*{mochizuki96} one may relativize the \citep*{bd-hitchin} construction of the stack of opers on a smooth, proper curve of genus $g$ to construct the stack of \emph{operized} curves over $\sMgz$ (i.e. a curve equipped with an oper on it) or more generally the stack of \emph{operized stable log-curves of type $(g,n)$}. But I do not pursue this line of thought here.

 Let $X/k$ be a smooth, projective curve of genus $g\geq2$ on $X$. Let $r\geq 2$ be an integer.  In this situation  one has at one's disposal the stack of $\pgl(r)$-opers, denoted $\oper_r(X)$, on $X$ (this stack is in fact a scheme). For $r=2$ the stack of $\pgl(2)$-opers  is the stack of indigenous bundles studied by Mochizuki in \citep*{mochizuki96}. Thus opers provide a natural generalization of indigenous bundles.  In \citep*{joshi09} some of Mochizuki's results were extended to all ranks, and in particular we defined, what we call the \emph{Hitchin-Mochizuki morphism}:
\begin{eqnarray*}
\oper_r(X)&\to& \hit_r(X)=\oplus_{i=2}^rH^i(X,(\Omega_X^{1})^{\tensor i})\\
(V,\nabla,V_\mydot)&\mapsto&\psi(V,\nabla)^{1/p}
\end{eqnarray*}
which maps an oper to the $p$-th root of the characteristic polynomial of the $p$-curvature of the oper connection. Let $\sNgz^r(X)$ be the schematic fibre over zero of the Hitchin-Mochizuki morphism. Let me call $\sNgz^r(X)$ the \emph{scheme of nilpotent $\pgl(r)$-opers  on $X$}.  A result of \citep*{joshi09}  is that $\sNgz^r(X)$ is finite (and in  we gave a conjectural degree of this map to be $p^{\dim \pgl(r)(g-1)}$). For $r=2$ this is \citep*{mochizuki96}.

The principal object of interest for this note is the stack $\sDgz^r\to\sMgz$ of \emph{dormant $\pgl(r)$-opers on $X$} which is a closed substack of $\sNgz^r$, consisting of nilpotent opers on $X$ whose underlying indigenous bundle is \emph{dormant} i.e., the $p$-curvature of the connection is zero. This is the \emph{dormant operatic locus} of the title.  The closed subscheme of $\sDgz^r(X)\subset \sNgz^r(X)$ defined by the equation $\psi=0$ (i.e. by the vanishing of the $p$-curvature of the oper connection). For brevity I will shorten  \emph{dormant $\pgl(r)$-opers on $X$} to \emph{dormant opers on $X$}.  The dormant locus $\sDgz^r(X)\subset \sNgz^r(X)$ is also a finite subscheme of $\oper_r(X)$. For $r=2$ this was proved by Mochizuki. Note that my notation for the dormant locus is different from that used in \citep*{mochizuki-foundations} as it is easier to remember: $\sN$ for nilpotent and $\sD$ for dormant.

 One is interested in the degree of $\sDgz^r(X)$. In the present note I describe a conjectural formula for this degree.  For $r=2,g=2,p\geq 5$ this was proved by \citep*{mochizuki-foundations},\citep*{osserman07} and \citep*{lange08} by completely different methods.

The provenance of this formula is as follows. In a ``back-of-the-envelop'' calculation done for my NSF grant proposal for 2006 (the proposal was not funded), I had observed that the calculations by \citep*{lange08}, \citep*{mochizuki-foundations},
\citep*{osserman07}, for the degree of the dormant (tautologically operatic)
locus of rank two bundles for an \textit{ordinary} (or \textit{general}) curve of
genus two and calculation of Gromov-Witten invariants of certain
Quot-schemes due to \citep*{lange03}, \citep*{holla04} agreed. In this setup
\citep*{lange08} had calculated the degree of the dormant locus (as a
degree of the Quot-scheme which I have described explicitly in this
note) as a certain Chern class; the fact that these two numbers
coincide was unequivocally established (in this case). This observed coincidence for $g=2,r=2$ was the basis for  Conjecture~\ref{dormant-degree-formula}. Let me note that this conjecture was based on one observed data point. At that time the left hand side of the formula made sense only for $r=2$ (by Mochizuki's work).
Subsequent to my work with Christian Pauly (\citep*{joshi09}) the
left hand side of the conjectural formula now makes sense for all
$p>C(r,g)$ (for a certain explicit constant $C(r,g)$ which I will explain in
the text). 

Recently Yasuhiro Wakabayashi  has given an ingenious proof of Conjecture~\ref{dormant-degree-formula} for  $X$ general, $r=2,g\geq 2$ and $p>2g-2$ in \citep*{wakabayashi13} (part of his PhD Thesis at RIMS). For more comments on Wakabayashi's work see Section~\ref{addcomments}. After this paper was submitted, Wakabayashi has also recently announced a proof of the generic case of Conjecture~\ref{dormant-degree-formula} for all ranks and genus (see \citep{wakabayashi14}). Wakabayashi's \textit{remarkable and foundational work} also establishes the theory of opers over pointed stable curves and also makes substantial progress towards the speculation voiced in this paper about ``fusion calculus'' for the degrees of dormant loci in different rank and genera, analogous to fusion calculus for the classical Verlinde formula. In a series of papers (available at \url{www.arxiv.org}) written subsequent to \textit{loc. cit.}, Wakabayashi has proved a number of new and original results which provide a deeper insight into this fascinating subject.

For several years, since my first observation in 2006, I had  lost interest in pursuing or publishing the set of ideas presented here. I am deeply indebted to Shinichi Mochizuki for suggesting that my conjectural formula be published, and also for many conversations around this subject and for his invitation to spend some time at the Research Institute for Mathematical Sciences (RIMS), Kyoto.  I would also like to thank Akio Tamagawa for numerous conversations around many topics of common
interest. I am also grateful to RIMS for providing hospitality
and an excellent working environment. 

I also thank the anonymous referee for a careful and diligent reading of this paper, pointing out a number of  deficiencies of the earlier version of this paper, correcting many typos and providing the reference \citep{marian07}. The referee's effort has led to an enormous improvement in the readability of this paper. Thanks are also due to Brian Conrad for his support and his patience (as an editor) in dealing with the long delays (on my part) in completing revision(s) of this manuscript.

\section{Notations}
Let $k$ be an algebraically closed field of characteristic $p>0$. Let $S/k$ be a $k$-scheme.
Let $X\to S$ be a proper, smooth morphism of relative dimension one over $S$.   Let
$\sigma:S\to S$ be the absolute Frobenius morphism of $S$. Let
$X^{(1)}=X\times_{k,\sigma}S$. Then one has a ``standard''
Frobenius diagram (which commutes!) with $F=F_{X/S}$ the relative Frobenius morphism of $X\to S$:
\begin{equation}
\xymatrix{
  X \ar@/_/[ddr]_{f} \ar@/^/[drr]^{F_{abs}}
    \ar[dr]^{F}                   \\
   & X^{(1)} \ar[d]^{f^{(1)}} \ar[r]_{\sigma}
                      & X \ar[d]_{f}    \\
   & S \ar[r]^{\sigma}     & S
   }
\end{equation}

Now suppose that $S=\spec(k)$. Let $J_X$ be the
Jacobian of $X$, let $J_X[n]$ be the closed subscheme of $J_X$ which
is the kernel of multiplication by $n$.
Let $\sV:J_X\to J_X$ be the verschiebung.


\section{Opers}
From now on fix a smooth, proper curve $X/k$ of genus $g\geq 2$. Let $\Omega_X^1$ be the sheaf of regular one forms on $X$.

Assume that $p>C(r,g)=r(r-1)(r-2)(g-1)$ (I do this because I will use a crucial result of \citep*{joshi09} which requires this assumption about $p$, though most of the general facts about opers which I recall below can be proved without this assumption. Opers were introduced in \citep*{bd-hitchin}. Reader will find \citep*{beilinson05} quite useful. Opers in characteristic $p$ were introduced in \citep*{joshi06} and studied in greater detail in \citep*{joshi09}. I refer the reader to these references for basic facts on opers.

An $\SL(r)$ oper on $X$ is a triple $(V,V_\mydot,\nabla)$ with $(\det(V),\det(\nabla))$ isomorphic to $(\O_X,\nabla=d)$ and a decreasing flag $V_\mydot$ of subbundles which satisfies Griffiths transversality with respect to $\nabla$ and for all $1\leq i\leq r$, the $\O_X$-linear morphism induced by $\nabla$:
\begin{equation}
\gr^i(V)\to \gr^{i-1}(V)\tensor \Omega^1_X
\end{equation}
is an isomorphism of line bundles.

A $\pgl(r)$-oper is the projectivization $\P(V)$ of an $\SL(r)$-oper $(V,\nabla,V_\mydot)$ and  together with the induced connection on $\P(V)$ and the induced reduction of structure group to a Borel subgroup.

The stack, $\oper_r(X)$, of $\pgl(r)$-opers on $X$ is in fact a scheme, isomorphic to the Hitchin space $\oplus_{i=2}^r H^0(X,(\Omega_X^1)^{\tensor i})$ (see \citep*{bd-hitchin,beilinson05,joshi09}).

The Hitchin-Mochizuki morphism of \citep*{joshi09} is the morphism

\begin{equation}
\oper_r(X)\to \hit_r(X)=\oplus_{i=2}^rH^i(X,(\Omega_X^{1})^{\tensor i})
\end{equation}
which maps an oper to the $p$-th root of the $p$-curvature.
Let $\sNgz^r\subset\oper_r(X)$ be the schematic fibre over zero of the Hitchin-mochizuki morphism. Then $\sNgz^r$  is finite (as a scheme) (see \citep*{joshi09}) and in fact the Hitchin-Mochizuki morphism is finite. Let $\sDrgz\subset\sNgz^r$ be the closed subscheme of dormant opers, i.e., the subscheme defined by the vanishing of the $p$-curvature.

\section{A Quot scheme} Let us fix, for once and for all, a line bundle $L$ on $X$
such that
\begin{equation}\label{l-condition}
L^{\tensor r}\tensor(\Omega^1_X)^{\tensor \frac{r(r-1)}{2}}=\O_X.
\end{equation} Let us
note that $\frac{r(r-1)}{2}$ is always an integer and as $k$ is
algebraically closed such a line bundle $L$ always exists. Clearly
such an $L$ is unique up to tensoring by a line bundle of degree
$r$. Thus the scheme of such line bundles is evidently a torsor over
$J_X[r]$.

Let $\Q^{r,0}_L$ be the quotient scheme $Quot^{r,0}_L$ defined by
its functor of points as
\begin{equation}
S\mapsto
\left\{V=ker(\pi^*_{X^{(1)}}((F\times\id_S)_*(L))\twoheadrightarrow
G)| \Deg(V)=0, \rk(V)=r\right\},
\end{equation}
where $G$ is a coherent sheaf on $X^{(1)}$. By Grothendieck, this
functor is representable and proper.

Next I prove a simple Lemma which will be useful in subsequent constructions. This version of the lemma is taken from \citep*{wakabayashi13} as it is better than my original formulation (in a earlier version of this manuscript).

\begin{lemma}
The scheme $\Q^{r,0}_L$ carries a natural action of $\ker(\sV)$
given by $V\in\Q^{r,0}_L\mapsto V\tensor \gamma$ for any $\gamma\in
\sV$.
\end{lemma}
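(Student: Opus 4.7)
The plan is to verify, at the level of $S$-valued points, that tensoring a sub-sheaf $V$ by a line bundle $\gamma \in \ker(\sV)$ realises $V\tensor \gamma$ as another sub-sheaf of the same ambient sheaf $\pi_{X^{(1)}}^*(F\times\id_S)_*(L)$, and that this operation respects the group law on $\ker(\sV)$.

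First, I would record the standard fact that, under the principal polarisation of $J_X$ and the identification $J_X^{(p)}\cong J_{X^{(1)}}$, the Verschiebung isogeny $\sV:J_X\to J_X$ is Cartier dual to Frobenius, so that $\ker(\sV)$ is canonically identified with $\ker\bigl(F^*:J_{X^{(1)}}\to J_X\bigr)$. Accordingly each $\gamma\in \ker(\sV)$ is represented by a degree-zero line bundle on $X^{(1)}$, still denoted $\gamma$, together with a chosen isomorphism $F^*\gamma\xrightarrow{\sim}\O_X$; arranging these trivialisations coherently with the group law in $\ker(\sV)$ is an essential but routine bookkeeping step.

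Second, given an $S$-point $V\hookrightarrow \pi_{X^{(1)}}^*(F\times\id_S)_*(L)$ of $\Q^{r,0}_L$, I would tensor the inclusion by $\pi_{X^{(1)}}^*\gamma$ and apply the projection formula:
\[
(F\times\id_S)_*(L)\tensor \pi_{X^{(1)}}^*\gamma
\;\cong\; (F\times\id_S)_*\bigl(L\tensor \pi_X^*(F^*\gamma)\bigr)
\;\cong\; (F\times\id_S)_*(L),
\]
where the last isomorphism uses the fixed trivialisation $F^*\gamma\cong \O_X$. Pulling this back along $\pi_{X^{(1)}}$ exhibits $V\tensor \pi_{X^{(1)}}^*\gamma$ as a subsheaf of $\pi_{X^{(1)}}^*(F\times\id_S)_*(L)$; the rank is preserved and the degree becomes $\Deg V + r\,\Deg\gamma = 0$, so $V\tensor \gamma$ is once again an $S$-point of $\Q^{r,0}_L$, with quotient $G\tensor \pi_{X^{(1)}}^*\gamma$.

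Third, I would verify the group-action axioms. Functoriality in $S$ is automatic because pull-back, push-forward, and tensor products are all compatible with base change. The identity element $\O_{X^{(1)}}\in \ker(\sV)$ acts as the identity, and associativity $(V\tensor \gamma_1)\tensor \gamma_2\cong V\tensor (\gamma_1\tensor \gamma_2)$ reduces to the canonical associativity of tensor products, provided the trivialisations $F^*\gamma\cong \O_X$ were chosen compatibly with multiplication in $\ker(\sV)$. The only genuinely non-formal point I anticipate is exactly this last compatibility: upgrading an ``action up to isomorphism'' into a strict action on the representable functor $\Q^{r,0}_L$ requires choosing the collection of trivialisations as a group homomorphism, which is always possible because $\ker(\sV)$ is a finite commutative group scheme and the obstruction lives in a trivial cohomology group.
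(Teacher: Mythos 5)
Your argument is correct and follows essentially the same route as the paper: the paper's one-line proof rests on exactly the projection-formula identity $F_*(L)\otimes\gamma \cong F_*(L\otimes F^*\gamma)\cong F_*(L)$ (written there as $F_*(L\otimes\gamma^{p})$, using that $\gamma\in\ker(\sV)$ forces $F^*\gamma$ to be trivial), together with the observation $\det(V\otimes\gamma)=\det(V)\otimes\gamma^{r}$. Your extra bookkeeping --- identifying $\ker(\sV)$ with $\ker(F^*)$, checking rank and degree, and choosing the trivialisations of $F^*\gamma$ compatibly with the group law so the action is strict --- is sound and simply makes explicit what the paper leaves implicit.
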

\begin{proof}
The lemma is evident from the definition of the action and the fact that
if $V\hookrightarrow F_*(L)$ then $V\tensor \gamma\hookrightarrow
F_*(L)\tensor\gamma=F_*(L\tensor\gamma^p)=F_*(L)$. Observe that
$\det(V\tensor \gamma)=\det(V)\tensor\gamma^r$.
\end{proof}

Let $\Q^{r,0}_{L,\gamma}$ be the closed subscheme of $\Q^{r,0}_L$
defined by the condition
\begin{equation}
V\in\Q^{r,0}_{L,\gamma}\Leftrightarrow V\in\Q^{r,0}_L, \text{\ and\
}\det(V)=\gamma^r.
\end{equation}
Then clearly $\Q^{r,0}_{L,\gamma}$ is closed in $\Q^{r,0}_L$ and
tensoring with  $\gamma'\in \ker(\sV)$ induces an isomorphism
$\Q^{r,0}_{L,\gamma}\to \Q^{r,0}_{L,\gamma\tensor\gamma'}$ for any
$\gamma,\gamma'\in \ker(\sV)$.

 Then, as $\Deg(\ker(\sV))=p^g$, one sees that
\begin{equation}\label{eq1}
\Deg(\Q^{r,0}_L)=p^g\Deg(\Q^{r,0}_{L,\gamma})
\end{equation}
 for any $\gamma\in
\ker(\sV)$.

\section{The dormant locus of $\SL(r)$-opers}
Let me digress in this section and describe the dormant locus of $\SL(r)$-opers (defined analogously). In other words
one considers vector bundles rather than their projectivizations. So I  indicate how to do this in this section. One may consider the dormant locus $\tDrgz$ in the stack of $\SL(r)$ opers. This is defined as follows: for any $k$-scheme $S$, one has a groupoid $\tDrgz(S)$ given by triples $(V,\nabla,V_\mydot)$ where $V$ is a vector bundle of rank $r$ on $X\times S$; $\nabla$ is an $\O_S$-linear connection on $V$ which is dormant and $V_\mydot$ is the oper flag on $V$, plus  an isomorphism of $\O_S$-linear connections $(\det(V),\det(\nabla))\isom(\O_{X\times S},\nabla=d)$. Morphisms over $S$ are isomorphisms of vector bundles $V$ which commute with the connection, the trivialization of the determinant and preserving the filtrations.  Tensoring with a line bundle of order $r$ gives a natural action of the stack of line bundles of order $r$ on $X$ (recall that this is the stack, which for any $k$-scheme $S$, associates the groupoid consisting of a line bundle $\eta$ on $X\times S$ and an isomorphism $\eta^r\simeq \O_{X\times S}$) on $\tDrgz$. I will now explicate action. Let $V\in\tDrgz(S)$ and let $\eta$ be a line bundle of order $r$ on $X$. Then $\eta$ carries a unique connection $\nabla_\eta$ such that $(\eta^r,\nabla_{\eta^r})=(\O_{X\times S},d)$. It is easy to see that the $p$-curvature of this connection on $\eta$ is zero (see \citep[Proposition 2.1.3]{joshi09}).

Equip $V\tensor\eta$ with the tensor product of the connection from $V$ and the connection of $\eta$. Let us note that the choice of Cartier connection on $\eta$ fixes for us a canonical isomorphism on the spaces of connections on $V$ and $V\tensor\eta$ (the two spaces are $H^0(X,\Omega^1\tensor\End(V))$ and $H^0(X,\Omega^1\tensor\End(V\tensor\eta))$ (respectively). Now equip $V\tensor\eta$ with the filtration induced from $V$. It is easy to see that this provides the structure of an oper on $V\tensor\eta$ and $(\det(V\tensor\eta),\nabla)=(\O_{X\times S},d)$. Thus one has for any pair $V\in\tDrgz$ and  $\eta$ a line bundle of order $r$ on $X$, a canonical oper $V\tensor \eta$ in $\tDrgz$  as claimed.

Further it is evident from our description that this action of line bundles of order $r$ is also transitive. This action is also compatible with the action of line bundles of order $r$  on the quotient line bundle  $V\to \gr^r(V)$ (such line bundles $\gr^r(V)$  also form a torsor under line bundles of order $r$). 

The structure of $\tDrgz$ is studied in detail in \citep{bd-hitchin,joshi09}. 
Now fix a line bundle $\theta\in{\rm Pic}(X)$ such that 
$$\theta^{\tensor 2}\isom\Omega^1_X$$
so $\theta$ is a theta characteristic on $X$. This provides a choice of $L$: let $L=\theta^{-(r-1)}$. Then one has an isomorphism $$L^{\tensor r}\tensor(\Omega^1_X)^{r(r-1)/2}\isom\O_X.$$ For a $k$-scheme $S$, let $p_X:X\times S\to X$ be the projection to $X$.

By the proof of \citep[Proposition~3.2.3]{joshi09}, the choice of $\theta$-characteristic provides an isomorphism of $\tDrgz$ with the stack $\sDrgz\times {\mathcal T}_r(X)$, where $\mathcal{T}_r(X)$ is the stack of line bundles of order $r$ on $X$. 

With the choice of $L$  and a line bundle $\eta$ on $X$ of order $r$, I define a substack of $\tDrgz$:
\begin{equation}
\tDrgz{}_{,L,\eta}(S)=\left\{V\in\tDrgz(S)| V\twoheadrightarrow
\gr^r(V)=p^*_X(L)\tensor\eta\right\}.
\end{equation}
This is clearly a  substack of $\tDrgz$. Further recall from \citep{bd-hitchin,joshi09} that an $SL(r)$-oper $(V,\nabla,V_\mydot)$ has ${\rm Aut}((V,\nabla,V_\mydot))=\mu_r$ and that any such automorphism is central in ${\rm Aut}(V)$. Thus it follows that objects of $\tDrgz{}_{,L,\eta}$ have no automorphisms.

The canonical arrow $\tDrgz\to\sDrgz$ given by projectivization of the $\SL(r)$-oper data, makes $\tDrgz$ into torsor (over $\sDrgz$) for the line bundles of order $r$ on $X$. Choice of $\theta$ gives us a section of $\tDrgz\to\sDrgz$  and induces an isomorphism  of $\tDrgz{}_{,L,0}$ and $\sDrgz$ which depends on the choice of $L$ (hence on the choice of $\theta$). 

The stack $\tDrgz{}_{,L,0}$ is thus  identified with the stack of dormant $\pgl(r)$-opers on $X$ (see \citep{joshi09}).  The stack of $\pgl(r)$-opers is a scheme and $\tDrgz$ is a closed subscheme of the scheme of $\pgl(r)$-opers. Since the latter is representable by an (affine) scheme it follows that $\tDrgz{}_{,L,0}$ is also represented by an (affine) scheme.

I will use this identification of $\tDrgz{}_{L,0}$ with $\sDrgz$ but suppress $\theta$ from the notation for convenience. 
In particular one sees that
\begin{equation}\label{eq2}
\Deg(\tDrgz)=r^{2g}\Deg(\tDrgz{}_{,L,0})=r^{2g}\Deg(\sDrgz) ,
\end{equation}
where $0\in J_X[r]$ is the identity element.

\section{A canonical isomorphism}
\begin{theorem}\label{eq3}
If $p>C(r,g)$ and $g\geq 2$ then the natural morphism
\begin{eqnarray*}
  \tDrgz{}_{,L,0} &\to & \Q^{r,0}_{L,0} \\
  V &\mapsto& V^\nabla\hookrightarrow F_*(L)\twoheadrightarrow F_*(L)/V^\nabla
\end{eqnarray*}
 is an isomorphism of  schemes.
\end{theorem}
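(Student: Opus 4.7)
The plan is to build mutually inverse maps using Cartier descent together with the adjunction $F^* \dashv F_*$, and to promote the resulting bijection on points to an isomorphism of representing schemes by noting that each step is defined by a universal, base-change-compatible construction.

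For the forward direction, let $(V,\nabla,V_\mydot)$ be a dormant $SL(r)$-oper with $\gr^r(V) = L$. Since the $p$-curvature of $\nabla$ vanishes, Cartier descent produces a unique bundle $W := V^\nabla$ on $X^{(1)}$ with $F^*W = V$ canonically; the $SL(r)$-data $(\det V,\det\nabla) = (\O_X,d)$ descends to $\det W = \O_{X^{(1)}}$, and moreover $\rk W = r$, $\deg W = 0$. Under the adjunction $F^* \dashv F_*$, the surjection $V \twoheadrightarrow L$ transposes to a morphism $W \to F_*L$, which I claim is injective. Indeed, with $K := \ker(W \to F_*L)$, the subbundle $F^*K \subset V$ is horizontal and contained in $V^{(1)} := \ker(V \to L)$. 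Define $\O_X$-linear maps $\phi_i : V^{(i)} \to L \tensor (\Omega_X^1)^{\tensor i}$ inductively by $\phi_0 = (V\to L)$ and $\phi_i(v) = (\phi_{i-1}\tensor\id)(\nabla v)$, with $V^{(i+1)} := \ker\phi_i$; this is well-defined and $\O_X$-linear because $\phi_{i-1}$ vanishes on $V^{(i)}$. Any horizontal section of $F^*K$ is killed by every $\phi_i$, hence lies in $V^{(r)} = 0$; so $K = 0$, and the image lies in $\Q^{r,0}_{L,0}$.

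For the inverse direction, given $W \hookrightarrow F_*L$ in $\Q^{r,0}_{L,0}$, set $V := F^*W$ equipped with its canonical Cartier connection $\nabla$; this is dormant and satisfies $(\det V, \det \nabla) = (\O_X, d)$. The inclusion adjoins to a morphism $\phi_0 : V \to L$. Define the candidate oper flag by the same inductive formula $V^{(0)} = V$, $V^{(i+1)} = \ker\phi_i$, $\phi_i(v) = (\phi_{i-1} \tensor \id)(\nabla v)$. The oper axioms reduce to surjectivity of each $\phi_i$ for $i = 0, \ldots, r-1$; given this, the $\phi_i$ provide the required isomorphisms of line bundles $V^{(i)}/V^{(i+1)} \cong L \tensor (\Omega_X^1)^{\tensor i}$, and the constraint $L^{\tensor r} \tensor (\Omega_X^1)^{\tensor r(r-1)/2} = \O_X$ matches $\det V = \O_X$, forcing $V^{(r)} = 0$ for rank reasons.

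The main obstacle is establishing this surjectivity of every $\phi_i$ for an arbitrary $W \in \Q^{r,0}_{L,0}$. The strategy is to identify the candidate flag $V^{(i)}$ with the pullback $F^*(W \cap {\rm Fil}^i F_*L)$, where ${\rm Fil}^\bullet F_*L$ is the canonical Frobenius filtration on $F_*L$ whose successive quotients are the line bundles $L \tensor (\Omega_X^1)^{\tensor j}$ for $0 \leq j \leq p-1$; under this identification, surjectivity of each $\phi_i$ reduces to a slope/rank count forced by $\rk W = r$, $\deg W = 0$ and the hypothesis $p > C(r,g)$, the latter ruling out the pathological sub-bundle configurations excluded in \citep*{joshi09}. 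With both maps in hand, mutual inversion is immediate from naturality of the adjunction and the uniqueness of Cartier descent, and since every construction commutes with base change to any $k$-scheme $S$, the bijection upgrades to an isomorphism of the representing schemes.
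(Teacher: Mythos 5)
Your proposal is correct and takes essentially the same route as the paper: a functor-of-points argument built on the adjunction $F^*\dashv F_*$, Cartier descent, and the canonical filtration whose graded maps $\phi_i$ furnish the oper flag, with the crucial surjectivity of the $\phi_i$ (where $p>C(r,g)$ enters) ultimately deferred to the results of \citep*{joshi09}, exactly as the paper's proof does. You simply supply more detail than the paper on the injectivity of $W\to F_*L$ and on the base-change compatibility.
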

\begin{proof}
Since $\tDrgz{}_{,L,0}$ is represented by a scheme it suffices to prove this at the level of functor of points.	Let $S$ be any $k$-scheme. I describe a natural bijection of sets $\tDrgz{}_{,L,0}(S)$ and $\Q^{r,0}_{L,0}(S)$.  Let $V\in\tDrgz{}_{,L,\eta}(S)$. This
means one has an oper $(V,\nabla,V_\mydot,\alpha)$ on $X\times S$.
Let me note that $F\times\id_S$ is the relative Frobenius of $X\times S$. Consider the subsheaf $V^\nabla\subset V$ of flat sections of $\nabla$.  The composing the inclusion $V^\nabla\into V$ with the surjection  $V\to V/V_1=L$ one gets a morphism $V^\nabla\to L$ of sheaves (but not of $\O_{X\times S}$-modules). For local sections $f$ of $\O_{X\times S}$ and $v$ of $V^\nabla$, one has $\nabla(f^pv)=df^p\tensor v+f^p\nabla(v)=0$. Thus if one considers $L$ as an $\O_{X\times S}$-module through the Frobenius $F\times\id_S$, the morphism of sheaves $V^\nabla\to V/V_1=L$ can be naturally viewed as a morphism of $\O_{X\times S}$-modules through the $F\times\id_S$, that is a morphism of locally free sheaves of $\O_{X\times S}$-modules $V^\nabla\to F_*(L)$.  As $\nabla$ is dormant, one has an isomorphism of vector bundles $(F\times\id_S)^*(V^\nabla)\isom V$ on $X\times S$ and hence $V^\nabla$ is a locally free of rank $r=\rk(V)$. By \citep*[Theorem~4.1.1(1)]{joshi09}, which applies by our assumption on $p$, one has $V^\nabla\hookrightarrow (F\times \id_S)_*(\pi_X^*(L))$ and this gives an object of the quot scheme $\Q^{r,0}_{L,0}(S)$. The construction of $V^\nabla\into F_*(L)\to F_*(L)/V^\nabla$ is clearly functorial in $S$. This gives us a functor $\tDrgz{}_{,L,0}(S)\to \Q^{r,0}_{L,0}(S)$. On the other hand
if $V'\in\Q^{r,0}_{L,0}(S)$ is an object of the quot-scheme, then this means one has a quotient
\be
0\to V'\to (F\times \id_S)_*(\pi_X^*(L))\to G\to 0.\ee
One notes that by \citep[Theorem~4.1.1(2)]{joshi09} every such $V'$ is semi-stable. Then adjunction gives a morphism of bundles $V=(F\times\id_S)^*(V')\to L$ and equipping $V$ with the Cartier connection $\nabla$ (which is dormant) and the canonical filtration \'a la \citep*[Section 5.3]{joshi06} one gets the oper-structure by Theorem 5.4.1, Prop. 3.4.2 and Remark 5.4.3 of \citep*{joshi09}.
This gives a dormant oper $(V=F\times\id_S)^*(V'),\nabla^{can}, (F\times\id_S)^*(V')_\mydot)\in\tDrgz{}_{,L,0}(S)$ on $X\times S$ whose construction is clearly functorial in $S$, and provides for each $S$ an inverse $\Q^{r,0}_{L,0}(S)\to\tDrgz{}_{,L,0}(S)$.  This completes the proof.
\end{proof}

\begin{corollary}
Let $X$ be a smooth, projective curve of genus $g\geq 2$ over an
algebraically closed field of characteristic $p>C(r,q)$. Let $L$ be
a line bundle fixed earlier. Then
\be
\Deg({\sDrgz(X)}_{L,0})=\frac{1}{p^g}\Deg(\Q^{r,0}_{L})
\ee
and
\be
\Deg({\tDrgz(X)}_{L,0})=\frac{r^{2g}}{p^g}\Deg(\Q^{r,0}_{L}).
\ee
\end{corollary}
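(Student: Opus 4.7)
The plan is to chain together the scheme isomorphism from Theorem~\ref{eq3} with the two ``dividing out by a group action'' degree identities~\eqref{eq1} and~\eqref{eq2} that were already established earlier in the excerpt. In other words, both equalities are essentially bookkeeping consequences, and no further geometry needs to be invoked.

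First, for the equation on $\sDrgz(X)_{L,0}$, I would apply Theorem~\ref{eq3}, which gives an isomorphism of schemes $\sDrgz(X)_{L,0}\xrightarrow{\sim} \Q^{r,0}_{L,0}$ (via $V\mapsto (V\hookrightarrow F_*(L))$), hence an equality of degrees over $k$:
\begin{equation*}
\Deg(\sDrgz(X)_{L,0}) = \Deg(\Q^{r,0}_{L,0}).
\end{equation*}
Next, specialize~\eqref{eq1} to the identity element $\gamma=0\in \ker(\sV)$, which yields $\Deg(\Q^{r,0}_L) = p^g\,\Deg(\Q^{r,0}_{L,0})$. Dividing gives the first formula:
\begin{equation*}
\Deg(\sDrgz(X)_{L,0}) = \frac{1}{p^g}\Deg(\Q^{r,0}_L).
\end{equation*}
Here one should check that $\Q^{r,0}_{L,0}$ is in fact $0$-dimensional so that its length makes sense as a degree; this is immediate from the corresponding finiteness of $\sDrgz(X)_{L,0}$ (which in turn follows from finiteness of $\sNgz^r$ quoted in the introduction and cited from \citep*{joshi09}).

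Second, for the equation on $\tDrgz(X)_{L,0}$, I would invoke~\eqref{eq2}, which records that the canonical projectivization map $\tDrgz\to\sDrgz$ is a $J_X[r]$-torsor (the transitive $J_X[r]$-action on $\tDrgz$ constructed via $V\mapsto V\tensor\eta^p$ has trivial stabilizers), and that $|J_X[r]|=r^{2g}$ since $(r,p)=1$. Combined with the displayed formula just derived, this immediately yields
\begin{equation*}
\Deg(\tDrgz(X)_{L,0}) = r^{2g}\Deg(\sDrgz(X)_{L,0}) = \frac{r^{2g}}{p^g}\Deg(\Q^{r,0}_L).
\end{equation*}

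There is no serious obstacle: the real content has been loaded into Theorem~\ref{eq3} (identifying the dormant locus with a quot scheme) and into the two torsor-style calculations~\eqref{eq1} and~\eqref{eq2} for the $\ker(\sV)$- and $J_X[r]$-actions. The only thing worth spelling out for a careful reader is that these actions are free and transitive on the connected components one is counting, so that the degree factors are exactly $p^g$ and $r^{2g}$ with no extra stacky automorphism correction.
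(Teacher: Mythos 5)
Your proposal is correct and follows essentially the same route as the paper, whose proof simply declares the corollary immediate from Theorem~\ref{eq3}, \eqref{eq1} and \eqref{eq2}; you have merely spelled out the chaining of those three facts, together with the sanity checks on freeness of the actions. Nothing further is needed.
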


\begin{proof}
The proof is immediate from \eqref{eq3}, \eqref{eq1} and
\eqref{eq2}.
\end{proof}

\section{Computing the schematic degree}
The preceding discussion reduces the problem of computing the
schematic degree of $\sD^r_{g,0}(X)$ to computing the degree of the
quot-scheme $\Q^{r,0}$. This is a special case of a more general
sort of problem which is very well-understood over complex numbers
under the assumption that the ambient bundles (whose) quot-schemes
are being studied are very general and the formula for the degree is
a special case of the \textit{Vafa-Intriligator Formula}
(see \citep*{intriligator91}, \citep*{holla04}). This formula
can be used, using results of \citep*{joshi09} and the results of the preceding sections,
to find a conjectural
formula for the degree of the dormant
operatic locus using the reductions made in the preceding section.
I show that my conjecture is true for $g=2,r=2$ (where the degree
of the dormant locus has been computed by different methods and
different authors).

Before I begin let us paraphrase what has been achieved in the
preceding sections. The problem of computing the degree has been
reduced to calculating the degree of the Quot-scheme $\Q^{r,0}$ of
quotients of $F_*(L)$ of rank $p-r$ and degree equal to
$\Deg(F_*(L))$.

This is a special case of the following problem, studied by \citep*{popa03},
\citep*{lange03}, \citep*{holla04} and several others (see references to papers
by these authors for a longer list).

Let $E$ be a vector bundle of rank $n$ and degree $d$. Let $1\leq
r\leq n$ be an integer. Let 
\be
e_{max}(E,r)=\max_{F}(\Deg(F))
\ee
where the maximum is taken over all subbundles $F\subset E$ of
rank $r$. Let 
\be\label{def:sr} s_r(E)=d\cdot r-n\cdot e_{max}(E,r).\ee 
Then by a well-known result of \citep*{mukai85}, one has
\be\label{eq:mukai-bound}
s_{r}(E)\leq r(n-r)g
\ee
and one can give a better estimate by a result of \citep*{hirschowitz88}. Under
the assumption that $E$ is very general and very-stable of degree
$d$ and rank $n$ one has
\begin{equation}\label{eq:hirschowitz}
s_r(E)=r(n-r)(g-1)+\varepsilon,
\end{equation}
where $\varepsilon$ is the unique integer satisfying the following
two conditions
\begin{enumerate}
  \item $0\leq\varepsilon<n$,
  \item $s_r(E)\equiv rd \bmod{n}$.
\end{enumerate}
The number $\varepsilon$, under these assumptions, is the dimension
of every irreducible component of the quot-scheme
$\Q^{r,e_{max}(E,r)}(E)$ of quotients of $E$ of rank $n-r$ and
degree $d-e_{max}(E,r)$. By maximality of $E$, every such quotient
is locally free and so the associated kernel is always a subbundle.
In particular if $s_r(E)=r(n-r)(g-1)$ then the quot-scheme is zero
dimensional and is known to be smooth (this is due to \citep*{popa03}).

Assume now that $s_r(E)=r(n-r)(g-1)$. Let
\be N(n,d,r,g)=\Deg(\Q^{r,e_{max}(E,r)}(E)).\ee Then one gets
\begin{eqnarray*}
  s_r(E) &=& r(n-r)(g-1) \\
  e_{max}(E,r) &=& dr-r(n-r)(g-1).
\end{eqnarray*}

In this setting a formula for the degree of this quot-scheme was
described by \citep*{holla04} in terms of Gromov-Witten theory; see \citep*{lange03} and its references for the rank two case. Recall that  
\begin{theorem}[{\citep[Theorem 4.2]{holla04}}]
Let $k=\C$, let $g\geq 2$, $E$ be a general stable
bundle of rank and degree as above.  Write $d=ar-b$ with $0\leq
b<r$. Then 
\begin{equation}\label{holla}
\Deg(\Q^{r,0}(E))=\frac{(-1)^{(r-1)(br-(g-1)r^2)/n}n^{r(g-1)}}{r!}\sum_{\zeta_1,\ldots,\zeta_r}\frac{\left(\prod_{i=1}^r\zeta_i\right)^{b-g+1}}{\prod_{i\neq
j}(\zeta_i-\zeta_j)^{g-1}},
\end{equation}
where $\zeta_i\in\C,\zeta_i^n=1$, for $1\leq i\leq r$ and the sum is over tuples
$(\zeta_1,\ldots,\zeta_r)$ with $\zeta_i\neq \zeta_j$.
\end{theorem}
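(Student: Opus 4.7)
The plan is to identify the Quot scheme $\Q^{r,0}(E)$ with a compact moduli space of maps into a Grassmannian and then invoke the Vafa-Intriligator formula coming from the quantum cohomology of $Gr(r,n)$.

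First, under the hypothesis $s_r(E)=r(n-r)(g-1)$, the result of \citep*{popa03} guarantees that the Quot scheme is smooth and zero-dimensional and that every parametrized quotient is locally free. Thus a closed point of $\Q^{r,0}(E)$ is equivalently a subbundle $F\subset E$ of rank $r$ and degree $e_{max}(E,r)=dr-r(n-r)(g-1)$. Via the classifying morphism $\varphi_F:X\to Gr(r,n)$, the bundle $F$ is the pullback of the tautological subbundle, so the degree of the Quot scheme is an intersection number on the moduli space of such morphisms of a prescribed degree.

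Second, I would translate this intersection number into a Gromov-Witten invariant on $Gr(r,n)$, namely an integral against the virtual fundamental class of $\overline{\mathcal{M}}_g(Gr(r,n),\beta)$ for the appropriate curve class $\beta$. The integrand encodes that the subbundle lies inside a fixed $E$ of degree $d=ar-b$; this reduces to the top Chern class of a natural bundle on the moduli of maps built from $E$ and the tautological quotient on $Gr(r,n)$. The need to compare the Kontsevich compactification with Grothendieck's Quot compactification motivates working with the Quot scheme-level virtual cycle directly, and the coincidence of the two numerical answers is ensured by smoothness.

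Third, one applies the Vafa-Intriligator formula, which evaluates such integrals using the presentation of the small quantum cohomology of $Gr(r,n)$ as a Jacobian ring whose spectrum is the set of unordered $r$-tuples of distinct $n$-th roots of unity. Each spectral point $(\rho_1,\ldots,\rho_r)$ contributes a factor $J(\rho)^{1-g}=\prod_{i\neq j}(\rho_i-\rho_j)^{1-g}$ coming from the genus-$g$ Frobenius trace on the semisimple quantum cohomology, and $1/r!$ symmetrizes the ordered sum. The factor $(\prod_i\rho_i)^{b-g+1}$ comes from evaluating a power of the determinant of the tautological subbundle, the exponent being dictated by the twist $d=ar-b$ of $E$ together with the $(g-1)$-shift built into the trace.

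The hard part will be the bookkeeping. One must identify on the nose the Gromov-Witten class whose integration reproduces $\Deg(\Q^{r,0}(E))$ and then track signs and normalizations through the dictionary between moduli of maps and the Quot compactification. In particular, the sign $(-1)^{(r-1)(br-(g-1)r^2)/n}$ and the factor $n^{r(g-1)}$ are very sensitive to orientation conventions and to the normalization of the Poincar\'e pairing on $Gr(r,n)$; experience with the Vafa-Intriligator literature shows that reconciling these conventions is where almost all of the computational labor lies, while the conceptual steps outlined above are essentially forced.
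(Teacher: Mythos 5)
The paper does not actually prove this statement: it is quoted as a theorem of Holla and used as a black box via the citation \citep*{holla04}, so there is no internal proof to compare yours against. Judged on its own, your outline follows the broad strategy of the Gromov--Witten approach, but it has a genuine gap at its central step. You write that a rank-$r$ subbundle $F\subset E$ gives a classifying morphism $\varphi_F\colon X\to Gr(r,n)$ pulling back the tautological subbundle to $F$. That is only true when $E$ is the trivial bundle $\O_X^{\oplus n}$; for a general stable $E$ of degree $d=ar-b\neq 0$ there is no map to the constant Grassmannian, only a section of the relative Grassmannian bundle $Gr(r,E)\to X$. Consequently $\Deg(\Q^{r,0}(E))$ is not, on the nose, a Gromov--Witten invariant of $Gr(r,n)$, and the main content of the theorem is precisely the reduction from counting points of the Quot scheme of a \emph{nontrivial} $E$ to an intersection number computable in the small quantum cohomology of the constant Grassmannian. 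In the literature this is done either by degenerating $E$ (to a split or trivial bundle) while controlling the limit of the Quot scheme, or by realizing the count as a top intersection of tautological Chern classes on a larger Quot scheme of $\O_X^{\oplus n}$ via elementary modifications; your proposal supplies neither.

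The remaining steps are likewise asserted rather than carried out: the comparison between the Quot compactification and the space of maps, the absence of excess or boundary contributions, and the evaluation by the Vafa--Intriligator trace formula each need an argument, and the specific exponent $b-g+1$ and the sign $(-1)^{(r-1)(br-(g-1)r^2)/n}$ --- which you defer to ``bookkeeping'' --- are exactly the output of those arguments. Since the claim is a precise closed formula, an outline that stops short of producing the exponents and the sign does not establish it. If you need this statement, the appropriate course (and the one the paper takes) is to cite \citep*{holla04} rather than to reprove it.
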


In my setup I  want to take $E=F_*(L)$ with \be\Deg(L)=(1-r)(g-1).\ee
Then using
\begin{equation*}
\mu(F_*(L))=\frac{\Deg(L)}{p}+\frac{(p-1)(g-1)}{p},
\end{equation*}
one sees that $d=\Deg(E)$ is
\begin{equation*}
d=(p-r)(g-1),
\end{equation*}
and $0\leq \varepsilon< p$ and
\begin{equation*}
s_r(E)\equiv r(p-r)(g-1)\bmod{p}.
\end{equation*}
Thus one gets $r(p-r)(g-1)+\varepsilon\equiv r(p-r)(g-1)\mod{p}$, as
$0 \leq\varepsilon<p$ this means $\varepsilon=0$ (which is as it
should be because in our case one of the main results of \citep*{joshi09}
says that this quot-scheme is finite) and a similar calculation
shows that
\begin{eqnarray}
  e_{max}(E,r) &=& dr-r(p-r)(g-1) \\
   &=& r(p-r)(g-1)-r(p-r)(g-1)\\
   &=&0.
\end{eqnarray}
By \citep*{joshi09} the quot-scheme $\Q^{r,0}_L(E)$ is non-empty and
every point of this scheme provides  such a subbundle of degree
zero.

\section{The degree conjecture}
One can now write down the conjectural formula for the degree
of the dormant locus.
\begin{conj}\label{dormant-degree-formula}
Let $k$ be an algebraically closed field of characteristic
$p>C(r,g)$ and $X/k$ be a smooth projective curve over $k$ of genus
$g\geq 2$. Let $L$ be a line bundle such that
$L^{r}\tensor\left(\Omega^1_X\right)^{\frac{r(r-1)}{2}}=\O_X$. Then
the degree
\begin{eqnarray*}
\Deg(\sD^r_{g,0}(X))&=&\frac{1}{p^g}N(p,(p-r)(g-1),r,g)\\
&=&\frac{1}{p^g}\Deg(\Q^{r,0}_L)\\
&=&\frac{1}{p^g}\frac{p^{r(g-1)}}{r!}\sum_{\zeta_1,\ldots,\zeta_r}\frac{\left(\prod_{i=1}^r\zeta_i\right)^{(r-1)(g-1)}}{\prod_{i\neq
j}(\zeta_i-\zeta_j)^{g-1}}
\end{eqnarray*}
where $\zeta_i\in\C,\zeta_i^p=1$, for $1\leq i\leq r$ and the sum is over tuples
$(\zeta_1,\ldots,\zeta_r)$ with $\zeta_i\neq \zeta_j$.
\end{conj}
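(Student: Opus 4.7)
The plan is to combine the reductions already established in Theorem~\ref{eq3} and the Corollary with a rigidity argument that makes the Vafa--Intrilligator/Holla formula applicable to the Quot scheme of the specific bundle $E=F_*(L)$. The chain
\[
\Deg(\sDrgz(X)) \;=\; \tfrac{1}{p^g}\Deg(\Q^{r,0}_L) \;\stackrel{?}{=}\; \tfrac{1}{p^g}N(p,(p-r)(g-1),r,g)
\]
splits the conjecture into two parts: the first equality is already formal from the preceding sections (Theorem~\ref{eq3} together with the $\ker(\sV)$-orbit count \eqref{eq1}), while the second equality is what needs a genuinely new argument.

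First I would carefully verify the numerical hypotheses of Holla's theorem for $E=F_*(L)$. Using $\Deg(L)=(1-r)(g-1)$ as fixed via \eqref{l-condition}, the computation already recorded in the text gives rank $n=p$, degree $d=(p-r)(g-1)$, $s_r(E)=r(p-r)(g-1)$ and $e_{\max}(E,r)=0$, so the Quot scheme to be evaluated is exactly our $\Q^{r,0}_L$; its zero-dimensionality is supplied by the finiteness theorem of \citep*{joshi09}. I would then establish that $F_*(L)$ is semistable (which is known for Frobenius push-forwards of line bundles on generic curves through the work of Sun and of Mehta--Pauly) and, for a general curve $X$, sufficiently generic in the sense of Mukai--Hirschowitz to guarantee that $\Q^{r,0}_L$ is smooth at every point. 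Under these hypotheses the length of $\Q^{r,0}_L$ agrees with the tautological Chern-class integral that appears on the right-hand side of Holla's formula, provided that this integral really is the universal topological invariant attached to the data $(n,d,r,g)$.

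The heart of the proof, and the main obstacle, is the rigidity statement that the length of $\Q^{r,0}(E)$ depends only on $(n,d,r,g)$ and not on the particular very-stable $E$ of this type. In characteristic zero this is exactly the Gromov--Witten-theoretic content of Holla's theorem, since the count is topological. To import it into characteristic $p$ I would lift the triple $(X, L, F_*(L))$ to a DVR of mixed characteristic---obstructions vanish for smooth curves and stable bundles once $p$ exceeds $C(r,g)$---and argue that, because $\Q^{r,0}_L$ is finite and smooth on the special fibre, the relative Quot scheme is finite flat over the base, so its length is preserved by specialisation. On the generic fibre one may then deform within the moduli of stable rank-$p$ degree-$(p-r)(g-1)$ bundles to a very general bundle and invoke Holla directly. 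An alternative, parallel to \citep*{wakabayashi13} for $r=2$, is to bypass Holla and compute $\Deg(\Q^{r,0}_L)$ by a direct intersection-theoretic calculation exploiting the $\ker(\sV)$-action and the explicit structure of $F_*(L)$; the delicate step in that approach is a higher-rank analogue of Wakabayashi's $J_X[2]$-equivariant enumeration, which is where I expect the bulk of the new work to lie.
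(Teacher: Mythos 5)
The first thing to say is that the paper does not prove this statement: it is stated, and meant, as a conjecture. What the paper actually establishes is only the reduction $\Deg(\sDrgz(X))=p^{-g}\Deg(\Q^{r,0}_L)$ (Theorem~\ref{eq3} combined with \eqref{eq1} and \eqref{eq2}), together with the numerical verification that the data $(n,d,r,g)=(p,(p-r)(g-1),r,g)$ of $E=F_*(L)$ are exactly those for which Holla's theorem would apply over $\C$ to a \emph{very general, very stable} bundle. The author explicitly flags the remaining step --- that the Vafa--Intrilligator/Holla count continues to hold in characteristic $p$ \emph{for the specific, non-general bundle} $F_*(L)$ --- as the conjectural content. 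Your proposal correctly isolates this same decomposition, which is good, but your plan for closing the gap does not work as stated.

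The central difficulty is that the length of a zero-dimensional Quot scheme is \emph{not} a deformation invariant of the numerical data $(n,d,r,g)$; only the virtual (Gromov--Witten) count is. Concretely: (a) if you lift $(X,L)$ to a DVR of mixed characteristic, the bundle $F_*(L)$ does not lift canonically (Frobenius does not lift), and even lifting it as an abstract bundle, the relative Quot scheme over the DVR is finite but need not be \emph{flat}; for a finite scheme over a DVR the special-fibre length dominates the generic-fibre length, so specialisation yields at best an inequality $\Deg(\Q^{r,0}_L)\geq N(p,(p-r)(g-1),r,g)$ unless you already know flatness --- which is equivalent to knowing the length you are trying to compute. (b) The smoothness/reducedness of $\Q^{r,0}(F_*(L))$ that you invoke ``in the sense of Mukai--Hirschowitz'' is precisely the hard unproven point: Popa's smoothness and the Hirschowitz bound are results about \emph{very general} bundles, and $F_*(L)$ is a highly special bundle (this is exactly why the author calls the formula a conjecture rather than a corollary of \citep*{holla04}). (c) The conjecture as stated is for an arbitrary smooth projective $X$, whereas your argument, even if repaired, would only address a general curve --- note that Wakabayashi's theorem for $r=2$ is likewise only for general $X$. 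So your proposal reproduces the paper's reduction faithfully, but the ``rigidity'' step is circular at the key point, and the statement remains open.
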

Note that I am conjecturing that the formula \citep*{holla04} and
the Vafa-Intriligator formula \citep*{intriligator91}, continues to
hold in characteristic $p$ and even holds for the quot-scheme of the specific
bundle $F_*(L)$ (which is not general).

To see that the exponents are what I claim they are, one notes that
here $n=p$ and on writing $d=(p-r)(g-1)=pa-b$ with $0\leq b<p$ gives
us \be d=p(g-1)-r(g-1)\ee as $p>C(r,g)$ with $b=r(g-1)$, $a=(g-1)$. The
power of $(-1)$ in Holla's formula cancels out with these choice of
parameters.

\section{The formula for genus two, rank two}
The degree of the quot-scheme in this special case ($g=2,r=2$ and $X$ is ordinary) was calculated by \citep*{lange08} by
an explicit method (which displays the degree in terms  Chern classes
of a suitable bundles) and also by Mochizuki in \citep*{mochizuki-foundations}, also see \citep*{osserman07} ($X$ general) by different methods. In \citep*{holla04} the number of maximal subbundles of a generic bundle was computed (for $g=2,r=2$) and this note began with my observation that these two numbers agree. Let me recall these known formulae here.

By \citep*{mochizuki-foundations,lange08,osserman07} one has
\begin{equation}
\Deg(\sD_2(X))=
\frac{(p^3-p)}{24}.
\end{equation}
On the other hand Holla's formula \citep*{holla04} gives  that
\be N(n,d,2,2)=\frac{n^3(n^2-1)}{24},\ee
which gives with $n=p$ and $d=p-2$, and so conjectural the degree of
$\Deg(\Q^{r,0}_L)$ is $\Deg(\Q^{r,0}_L)=N(p,p-2,2,2)$. This gives
with $g=2$ that
\be\Deg(\sD_2(X))=
\frac{1}{p^g} \frac{p^3(p^2-1)}{24},\ee
which is true  by the result of \citep*{mochizuki96}, \citep*{lange08} and \citep*{osserman07}.

\section{Additional Comments}\label{addcomments}
I learned from Wakabayashi's preprint that in \citep*{liu06} it was shown that for $r=2$, $\deg(\sDrgz)$ is a polynomial in $p$, of degree $3g-3$, with rational coefficients. This is based on a different approach (see \citep{wakabayashi13} for more comments on \citep*{liu06}). In \citep{wakabayashi13} one finds many explicit formulae for $\deg(\sDrgz)$ for small $g$.

The following remarkable fact is proved (for $r=2$) in \citep*{wakabayashi13}--the formula in Conjecture~\ref{dormant-degree-formula} is equivalent to
\be \Deg(\sDrgz)=2^{-g}\dim(H^0(\sU_X(2,\O_X),\theta^{p-2})\ee
where $\sU_X(2,\O_X)$ is the moduli space of semistable bundles of rank r=2 with trivial determinant and $\theta$ is the the theta-line bundle on this moduli space. The space on the right hand side of the above is the space of conformal blocks and its dimension is thus the Verlinde formula! (see \citep*{beauville94} for the terminology).

 As I was preparing to speak about this topic and Wakabayashi's work at a meeting in Nice in June 2013  (hosted by Christian Pauly), it seemed reasonable to me that this should hold for all ranks, and so I suggested at the meeting that Wakabayashi's observed coincidence should hold for all ranks. More precisely
\be\label{eq:verline-dormant-formula}
\dim(\sDrgz)=r^{-g}\dim H^0(\sU_X(r,\O_X),\theta^{p-r}).
\ee
Christian Pauly verified immediately after my lecture that the right hand side of  Conjecture~\ref{dormant-degree-formula}
is always the dimension of space of conformal blocks (up to the factor).  Thus one has a coincidence of two numbers whose genesis is rather complicated but one has no natural explanation for this numerical coincidence. At any rate one may view the above equality as a subtler form of Conjecture~\ref{dormant-degree-formula}. The connection between Verlinde numbers and the Vafa-Intrilligator formula is quite well-established (see \citep[Proposition~1]{marian07}).

This discussion begs the question: what is the most natural explanation for the coincidence of these two numbers in \eqref{eq:verline-dormant-formula}? What hidden relation exists between dormant opers and spaces of conformal blocks? Further let me note that the spaces $H^0(\sU(r)_X,\theta^m)$ of conformal blocks  satisfy \emph{Fusion rules} (which allow one to compute their dimensions combinatorially). Thus it seems reasonable to speculate that same sort of Fusion rules for computing $\sDrgz$ still lie hidden. One may, of course, infer these rules from the known rules for the right hand side and the observed coincidence, but this approach is not very insightful.

\bibliographystyle{plainnat}
\bibliography{dormant}

\begin{thebibliography}{21}
\providecommand{\natexlab}[1]{#1}
\providecommand{\url}[1]{\texttt{#1}}
\expandafter\ifx\csname urlstyle\endcsname\relax
  \providecommand{\doi}[1]{doi: #1}\else
  \providecommand{\doi}{doi: \begingroup \urlstyle{rm}\Url}\fi

\bibitem[Beauville and Laszlo(1994)]{beauville94}
Arnaud Beauville and Yves Laszlo.
\newblock Conformal blocks and generalized theta functions.
\newblock \emph{Comm. Math. Phys.}, 164:\penalty0 385--419, 1994.

\bibitem[Beilinson and Drinfeld(1997)]{bd-hitchin}
Alexander Beilinson and Vladimir Drinfeld.
\newblock Quantization of hitchin's integrable system and hecke eigensheaves.
\newblock 1997.

\bibitem[Beilinson and Drinfeld(2005)]{beilinson05}
Alexander Beilinson and Vladimir Drinfeld.
\newblock Opers.
\newblock \emph{Preprint:
  \href{http://arxiv.org/abs/math/0501398}{http://arxiv.org/abs/math/0501398}},
  2005.

\bibitem[Hirschowitz(1988)]{hirschowitz88}
A.~Hirschowitz.
\newblock Probl\`emes de brill-noether en rang sup\'erieur.
\newblock \emph{C.R. Math. Acad. Sci. Paris}, 307:\penalty0 153--156, 1988.

\bibitem[Holla(2004)]{holla04}
Yogish~I. Holla.
\newblock Counting maximal subbundles via {G}romov-{W}itten invariants.
\newblock \emph{Math. Ann.}, 328\penalty0 (1-2):\penalty0 121--133, 2004.

\bibitem[Intriligator(1991)]{intriligator91}
Kenneth Intriligator.
\newblock Fusion residues.
\newblock \emph{Modern Phys. Lett. A}, 6\penalty0 (38):\penalty0 3543--3556,
  1991.

\bibitem[{Joshi} and {Pauly}(2015)]{joshi09}
Kirti. {Joshi} and Christian. {Pauly}.
\newblock {Hitchin-Mochizuki morphism, Opers and Frobenius-destabilized vector
  bundles over curves}.
\newblock \emph{Advances in Math.}, 274:\penalty0 39--75, 2015.

\bibitem[Joshi et~al.(2006)Joshi, Ramanan, Xia, and Yu]{joshi06}
Kirti Joshi, S.~Ramanan, Eugene~Z. Xia, and Jiu-Kang Yu.
\newblock On vector bundles destabilized by frobenius pull-back.
\newblock \emph{Compositio Math.}, 142:\penalty0 616--630, 2006.

\bibitem[Kato(2000)]{fkato00}
Fumiharu Kato.
\newblock Log smooth deformation and moduli of log smooth curves.
\newblock \emph{Internat. J. Math.}, 11\penalty0 (2):\penalty0 215--232, 2000.

\bibitem[Lange and Newstead(2003)]{lange03}
H.~Lange and P.~E. Newstead.
\newblock Maximal subbundles and {G}romov-{W}itten invariants.
\newblock In \emph{A tribute to {C}. {S}. {S}eshadri ({C}hennai, 2002)}, Trends
  Math., pages 310--322. Birkh\"auser, Basel, 2003.

\bibitem[Lange and Pauly(2008)]{lange08}
Herbert Lange and Christian Pauly.
\newblock On {F}robenius-destabilized rank-2 vector bundles over curves.
\newblock \emph{Comment. Math. Helv.}, 83\penalty0 (1):\penalty0 179--209,
  2008.

\bibitem[Liu and Osserman(2006)]{liu06}
F.~Liu and B.~Osserman.
\newblock Mochizuki's indigenous bundles and ehrhart polynomials.
\newblock \emph{J. Algebraic Combin.}, 26:\penalty0 125--136, 2006.

\bibitem[Marian and Oprea(2007)]{marian07}
Alina Marian and Dragos Oprea.
\newblock The rank-level duality for non-abelian theta functions.
\newblock \emph{Invent. {M}ath.}, 168:\penalty0 225--247, 2007.

\bibitem[Miura(1991)]{miura}
Yuzuru Miura.
\newblock \emph{Classic Haiku A Master's selection}.
\newblock Charles E. Tuttle Publishing Company, Rutland, Vermont \& Tokyo,
  Japan, first edition edition, 1991.

\bibitem[Mochizuki(1996)]{mochizuki96}
Shinichi Mochizuki.
\newblock A theory of ordinary {$p$}-adic curves.
\newblock \emph{Publ. Res. Inst. Math. Sci.}, 32\penalty0 (6):\penalty0
  957--1152, 1996.

\bibitem[Mochizuki(1999)]{mochizuki-foundations}
Shinichi Mochizuki.
\newblock \emph{Foundations of {$p$}-adic {T}eichm\"uller theory}, volume~11 of
  \emph{AMS/IP Studies in Advanced Mathematics}.
\newblock American Mathematical Society, Providence, RI, 1999.

\bibitem[Mukai and Sakai(1985)]{mukai85}
Shigeru Mukai and Fumio Sakai.
\newblock Maximal subbundles of vector bundles on a curve.
\newblock \emph{Manuscripta Math.}, 52\penalty0 (1-3):\penalty0 251--256, 1985.

\bibitem[Osserman(2007)]{osserman07}
Brian Osserman.
\newblock Mochizuki's crys-stable bundles: A lexicon and applications.
\newblock \emph{Publ. {RIMS}, {K}yoto {U}niv.}, 43:\penalty0 95--119, 2007.

\bibitem[Popa and Roth(2003)]{popa03}
Mihnea Popa and Mike Roth.
\newblock Stable maps and quot schemes.
\newblock \emph{Invent. {M}ath.}, 152:\penalty0 625--663, 2003.

\bibitem[Wakabayashi(2014{\natexlab{a}})]{wakabayashi13}
Yasuhiro Wakabayashi.
\newblock An explicit formula for the generic number of dormant indigenous
  bundles.
\newblock \emph{Publications of the {R}esearch {I}nstitute for {M}athematical
  {S}ciences}, 50\penalty0 (3):\penalty0 383--409, 2014{\natexlab{a}}.

\bibitem[Wakabayashi(2014{\natexlab{b}})]{wakabayashi14}
Yasuhiro Wakabayashi.
\newblock A theory of dormant opers on pointed stable curves--a proof of
  {J}oshi's conjecture.
\newblock Preprint \url{http://arxiv.org/abs/1411.1208}, 2014{\natexlab{b}}.

\end{thebibliography}
\end{document}